\theoremstyle{plain}
\newtheorem{theorem}{Theorem}[section]
\newtheorem{lemma}[theorem]{Lemma}
\newtheorem{prop}[theorem]{Proposition}
\theoremstyle{definition}
\theoremstyle{remark}
\newtheorem{rem}[theorem]{Remark}
\newcommand{\nri}{n\rightarrow\infty}
\newcommand{\bbR}{\mathbb{R}}
\newcommand{\bbC}{\mathbb{C}}
\newcommand{\bbN}{\mathbb{N}}
\DeclareMathOperator*{\supp}{supp}
\DeclareMathOperator*{\ind}{ind}
\DeclareMathOperator*{\Imag}{Im}
\title[ ] {Analytic Versus Algebraic Density of Polynomials}
\author[]{Christian Berg, Brian Simanek and Richard Wellman}
\date{}
\begin{document}

\bibliographystyle{plain}

\thanks{  }

\maketitle

\begin{center}
\textit{Dedicated to the memory of Bent Fuglede (1925-2023)}
\end{center}

\begin{abstract}
We show that under very mild conditions on a measure $\mu$ on the interval $[0,\infty)$, the span of $\{x^k\}_{k=n}^{\infty}$ is dense in $L^2(\mu)$ for any $n=0,1,\ldots$.  We present two different proofs of this result, one based on the density index of Berg and Thill and one based on the Hilbert space $L^2(\mu)\oplus \bbC^{n+1}$.
Using the index of determinacy of Berg and Dur{\'a}n we prove that if the measure $\mu$ on $\bbR$ has infinite index of determinacy then the polynomial ideal $R(x)\bbC[x]$ is dense in $L^2(\mu)$ for any polynomial $R$ with zeros having no mass under $\mu$. 
\end{abstract}

\vspace{4mm}

\footnotesize\noindent\textbf{Keywords:} Polynomial Approximation, Density Index, Determinate Measures

\vspace{2mm}

\noindent\textbf{Mathematics Subject Classification:} Primary 41A10; Secondary 44A60

\vspace{2mm}

\section{Introduction}\label{intro}

Polynomial approximation is a powerful tool with many applications in analysis.  Two of the most celebrated results bare the name of Weierstrass and concern density in the sup-norm.  The  Weierstrass Approximation Theorem states that on a compact interval $[a,b]\subseteq\bbR$, the polynomials are dense in the space $C_{\bbR}([a,b])$ in the sup-norm.  More generally, the Stone-Weierstrass Theorem states that if $X$ is a compact Hausdorff space and $S$ is a subalgebra of $C_{\bbR}(X)$ that separates points and includes the constant functions, then $S$ is dense in $C_{\bbR}(X)$ in the sup-norm (see \cite[Theorem 2.5.2]{SimI}).  A more general approximation result is the M{\"u}ntz Theorem, which states that the span of $\{1,x^{\lambda_1},x^{\lambda_2},\ldots\}$ is dense in $C[0,1]$ if and only if
\[
\sum_{j=1}^{\infty}\frac{\lambda_j}{\lambda_j^2+1}=\infty
\]
(see \cite[Theorem 4.2.1]{BE161}).  There are more general versions of this theorem that apply in various function spaces, all with their own hypotheses on the space or the accuracy of approximation (see \cite[Chapters 4 and 6]{BE161}).  

One can also consider polynomial approximation in settings where these theorems do not apply, such as approximation in $L^2$ spaces or using collections of polynomials that do not form a unital subalgebra of $C_{\bbR}(X)$.  This latter situation is especially relevant since the discovery of families of exceptional orthogonal polynomials in 2009 (see \cite{GKM09}).  These are families of polynomials that are analogous to classical orthogonal polynomials, but they do not include polynomials of every degree.  Nevertheless, they can still form an orthonormal basis for certain weighted $L^2$ spaces on the real line.  While this fact clearly does not create a contradiction with any known results, it is still surprising in that it highlights the significant difference between sets that are algebraically dense and sets that are analytically dense.

This is the starting point of the problem that we will investigate here.  Specifically, we will consider spaces of the form $L^2(\bbR,\mu)$ for some measure $\mu$ and density of sets of polynomials that omit polynomials of certain degrees.  The most basic such set is $\{x^k\}_{k=n}^{\infty}$ for some $n\in\bbN_0:=\{0,1,2,\ldots\}$.  This is the setting we will consider first.

Let $\mathcal M^*(\bbR)$ denote the set of  measures $\mu$ on the real line with moments of any order and infinite support. The first condition implies that that any polynomial $p\in\bbC[x]$ is square integrable with respect to $\mu$, and the second condition implies that two polynomials equal $\mu$-almost everywhere are identical. We can therefore consider $\bbC[x]$ as a subset of the Hilbert space $L^2(\mu)$. For $\mu\in\mathcal M^*(\bbR)$ the moments are given as
\begin{equation}\label{eq:moments}
s_n(\mu)=\int x^n\,d\mu(x),\quad n\in\bbN_0,
\end{equation}
and we say that $\mu$ is determinate if there is no other measure $\nu\in\mathcal M^*(\bbR)$ with the same moments as $\mu$.
A measure $\mu\in\mathcal M^*(\bbR)$ that is not determinate is called indeterminate, and in this case the set of measures having the same moments as $\mu$ is an infinite  convex set $V$ called the set of solutions to the moment problem \eqref{eq:moments}.  We recall the basic result of M. Riesz, cf. \cite{R23}, that $\bbC[x]$ is dense in $L^2(\mu)$ if $\mu$ is determinate, and if $\mu$ is indeterminate, then $\bbC[x]$ is dense in $L^2(\nu)$ for $\nu\in V$ if and only if $\nu$ is an N-extremal solution (also called a von Neumann solution in \cite{S}). These solutions are very special and always discrete and concentrated in the
zero set of an entire function of minimal exponential type , see \cite{Ak},\cite{S}. Further details about N-extremal solutions are given in Section 2.

Our first main result is the following:

\begin{theorem}\label{maindense2}
Let $\mu\in\mathcal M^*(\bbR)$ satisfy the following conditions:
\begin{itemize}
\item[(1)] $\mu$ is determinate,
\item[(2)] $\supp(\mu)\subseteq[0,\infty)$,
\item[(3)] $0\in\supp(\mu)$, 
\item[(4)] $\mu(\{0\})=0$.
\end{itemize}
Then for any $n\in\bbN_0$, the linear span  of the monomials $\{x^k\}_{k=n}^{\infty}$ is dense in $L^2(\mu)$.
\end{theorem}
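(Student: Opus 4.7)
The plan is to reduce the density assertion, via a unitary isomorphism, to the classical density of polynomials in $L^2$ for a transformed measure, and then to verify the determinacy of that transformed measure.

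For $n\ge 1$, I would introduce the measure $\nu:=x^{2n}\,d\mu$ on $[0,\infty)$. Hypothesis (4) is what makes the map $U\colon L^2(\mu)\to L^2(\nu)$ defined by $U(f)(x):=f(x)/x^n$ well-posed $\mu$-a.e., and a direct change of variables
\[
\|U(f)\|_{L^2(\nu)}^2=\int\bigl|f(x)/x^n\bigr|^2 x^{2n}\,d\mu(x)=\|f\|_{L^2(\mu)}^2
\]
together with the inverse $V(g):=x^n g$ shows that $U$ is a unitary isomorphism. Under $U$, the monomial $x^k\in L^2(\mu)$ with $k\ge n$ is sent to $x^{k-n}\in L^2(\nu)$, so the density of $\{x^k\}_{k\ge n}$ in $L^2(\mu)$ is equivalent to the density of $\mathbb{C}[x]$ in $L^2(\nu)$. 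By M.~Riesz's theorem recalled in the introduction, the latter is in turn equivalent to $\nu$ being determinate. All other requirements for $\nu\in\mathcal M^*(\bbR)$ (all moments finite via $s_k(\nu)=s_{k+2n}(\mu)$, infinite support via $\supp\nu=\supp\mu$) and the analogues of (2)--(4) (in particular $\nu(\{0\})=0$ since $0^{2n}=0$) are immediate. The theorem thus reduces to the following \emph{Key Lemma}: if $\mu$ satisfies (1)--(4), then $x^{2n}\mu$ is determinate for every $n\ge 1$.

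To prove the Key Lemma I would induct on $n$, reducing it to the single step: \emph{if $\mu$ satisfies (1)--(4), then so does $x\mu$}, and in particular $x\mu$ is determinate. The analogues of (2)--(4) for $x\mu$ are direct. For determinacy, suppose for contradiction that $x\mu$ is indeterminate, so there exists $\sigma\in\mathcal M^*(\bbR)$ with $s_k(\sigma)=s_{k+1}(\mu)$ for all $k\ge 0$ and $\sigma\neq x\mu$. The natural way to produce a second solution of the moment problem for $\mu$ is to set
\[
d\tilde\mu(x):=x^{-1}\,d\sigma(x)\ \text{on}\ (0,\infty),\qquad \tilde\mu(\{0\}):=s_0(\mu)-\int_{(0,\infty)}x^{-1}\,d\sigma,
\]
so that $s_k(\tilde\mu)=s_k(\mu)$ for every $k\ge 0$. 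Hypothesis (1) would then force $\tilde\mu=\mu$, and unwinding the construction would yield $\sigma=x\mu$, contradicting $\sigma\neq x\mu$.

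The main obstacle is to verify that $\tilde\mu$ is indeed a positive, finite member of $\mathcal M^*(\bbR)$. This has two parts. First, one must show that $\int_{(0,\infty)}x^{-1}\,d\sigma$ is finite and at most $s_0(\mu)$, so that the atom at $0$ comes out nonnegative; here the hypotheses $0\in\supp\mu$ and $\mu(\{0\})=0$ enter essentially, via a comparison of Stieltjes transforms of $\mu$ and $\sigma$ near the origin that prevents $\sigma$ from concentrating too much mass near $0$. Second, one has to rule out $\sigma$ having mass on $(-\infty,0)$; this is where one upgrades Hamburger-determinacy of $\mu$ on $[0,\infty)$ to Stieltjes-type constraints on $\sigma$. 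I expect the first part to be the harder of the two and the point where the interplay of (1), (3), and (4) is most delicate.
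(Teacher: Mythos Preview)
Your reduction via the unitary $U:L^2(\mu)\to L^2(x^{2n}\mu)$ is correct and coincides exactly with the paper's Proposition~\ref{sigmainf}. (One minor overclaim: density of $\bbC[x]$ in $L^2(\nu)$ is \emph{implied by}, not equivalent to, determinacy of $\nu$---N-extremal indeterminate measures also have dense polynomials. This does not hurt you, since you only use the forward implication.)

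The substantive gap is the Key Lemma. You correctly isolate the inductive step ``$\mu$ satisfies (1)--(4) $\Rightarrow$ $x\mu$ determinate'' and you name the two obstacles, but you do not resolve them. Obstacle~2 can in fact be dispatched: if $x\mu$ were Hamburger-indeterminate but Stieltjes-determinate, it would be N-extremal, hence supported on the zero set of a nonzero entire function, and such a set cannot accumulate at $0$; this contradicts (3)+(4). Obstacle~1, however, is genuine. Given that $x\mu$ is indet(S), you need a solution $\sigma\neq x\mu$ on $[0,\infty)$ with $\int_{(0,\infty)} x^{-1}\,d\sigma\le s_0(\mu)$, and you offer no mechanism for controlling $\int x^{-1}\,d\sigma$ across the Nevanlinna family. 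Since $\int_{(0,\infty)} x^{-1}\,d(x\mu)=s_0(\mu)$ exactly, you are asking whether $x\mu$ is extremal for this functional among Stieltjes solutions, which is not a soft fact. As written, the proposal is a sketch with its hardest step left open.

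The paper's first proof closes precisely this gap by importing the Berg--Thill classification of measures with finite density index (their Theorem~2.6): under (1)--(4) one reads off $\delta(\mu)=\infty$ (Proposition~\ref{deltainf}), which is your Key Lemma in disguise, but established structurally rather than by the contrapositive construction you propose. The paper's second proof bypasses the Key Lemma entirely: it works in $L^2(\mu)\oplus\bbC^{n+1}$ and uses only that the smallest several zeros of $p_n$ cluster at $0$ (Lemma~\ref{gammarat}), a consequence of (1), (3), (4) via Gauss quadrature. If you want a self-contained argument avoiding \cite{B:T2}, that route is more promising than trying to close Obstacle~1 directly.
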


\begin{rem}{\rm Note that (3) and (4) implies that $0$ is not an isolated point of $\supp(\mu)$.}
\end{rem}
We will present two proofs of Theorem~\ref{maindense2}.  The first builds on work presented in \cite{B:T2} and uses the notion of a density index inspired by the solution of the Challifour problem, see \cite{B:T1}.  For any measure $\mu\in\mathcal M^*(\bbR)$ supported on $[0,\infty)$ and for which the polynomials are dense in $L^2(\mu)$, we define its density index
\begin{equation}\label{eq:denseindex}
\delta(\mu)=\sup\{k\in\bbN_0:\bbC[x]\,\mbox{is dense in}\, L^2(x^kd\mu(x))\}
\end{equation}
as in \cite{B:T2}. Note that it is important that $\supp(\mu)\subseteq [0,\infty)$ for this definition because $x^kd\mu(x)$ shall be a positive measure.  Many classification results about measures with finite density index can be found there. Let us just recall that any determinate non-discrete $\mu\in\mathcal M^*(\bbR)$ supported by $[0,\infty)$ has $\delta(\mu)=\infty$. 

According to a theorem of M. Riesz, see \cite[p. 223]{R23}, we have that $\mu\in\mathcal M^*(\bbR)$ is determinate iff $\bbC[x]$ is dense in $L^2((1+x^2)d\mu(x))$, so for a determinate measure $\mu$ supported by $[0,\infty)$
we have $\delta(\mu)\ge 2$, because $1, 2x, x^2\le 1+x^2$ for $x\ge 0$.

Our proof of Theorem \ref{maindense2} that uses the density index is given in Section \ref{first}.

The second proof of our main result will use the Hilbert space $L^2(\mu)\oplus\bbC^{n+1}$ and can be found in Section \ref{second}.

In Section 4 we prove Theorem~\ref{thm:indinf} based on an index of determinacy for measures $\mu\in\mathcal M^*(\bbR)$. It states that if $\mu$ has infinite index of determinacy and $R(t)$ is any polynomial such that $\mu$ has no mass  at the zeros of $R$, then the polynomial ideal $R(x)\bbC[x]$ is dense in $L^2(\mu)$. Any non-discrete measure $\mu\in\mathcal M^*(\bbR)$ has infinite index of determinacy.

\section{A First Approach using the density index}\label{first}

Our first proof of Theorem \ref{maindense2} comes in two parts, the first of which is the following proposition.

\begin{prop}\label{sigmainf}
Let $\mu\in \mathcal M^*(\bbR)$ with $\supp(\mu)\subseteq[0,\infty)$ satisfy $\delta(\mu)=\infty$ and $\mu(\{0\})=0$.  Then for any $n\in\bbN_0$ the linear span of
 $\{x^k\}_{k=n}^\infty$ is dense in $L^2(\mu)$.
\end{prop}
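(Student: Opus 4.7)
The plan is to show, via duality, that any $f\in L^2(\mu)$ orthogonal to $\{x^k\}_{k=n}^\infty$ must vanish $\mu$-a.e. The idea is to shift the problem into the weighted space $L^2(x^{2n}d\mu)$, where the hypothesis $\delta(\mu)=\infty$ can be directly invoked, using division by $x^n$ as the transfer map. The condition $\mu(\{0\})=0$ is what makes this division legitimate.

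First, I would let $f\in L^2(\mu)$ satisfy $\int f(x)\,x^k\,d\mu(x)=0$ for every $k\geq n$. Since $\mu(\{0\})=0$, the function $h(x):=f(x)/x^n$ is well-defined for $\mu$-a.e.\ $x$, and a direct computation gives
\[
\int |h(x)|^2\,x^{2n}\,d\mu(x)=\int_{(0,\infty)}|f(x)|^2\,d\mu(x)=\|f\|_{L^2(\mu)}^2<\infty,
\]
so $h\in L^2(x^{2n}d\mu)$. Next I would translate the orthogonality condition: for every $j\in\bbN_0$, setting $k=j+n\geq n$,
\[
\int h(x)\,x^j\cdot x^{2n}\,d\mu(x)=\int f(x)\,x^{j+n-n}\cdot x^{n}\,d\mu(x)\cdot\tfrac{1}{1}=\int f(x)\,x^{k}\,d\mu(x)=0,
\]
which says precisely that $h$ is orthogonal in $L^2(x^{2n}d\mu)$ to every element of $\bbC[x]$.

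Now I would invoke $\delta(\mu)=\infty$: by definition \eqref{eq:denseindex} this means $\bbC[x]$ is dense in $L^2(x^{2n}d\mu)$ (and polynomials lie in this space, since $\mu$ has all moments). Consequently $h=0$ in $L^2(x^{2n}d\mu)$, i.e., $h(x)=0$ for $\mu$-a.e.\ $x>0$, and hence $f(x)=x^n h(x)=0$ for $\mu$-a.e.\ $x>0$. Combined with $\mu(\{0\})=0$, we get $f=0$ in $L^2(\mu)$, proving density by the Hahn--Banach / Hilbert-space orthogonality criterion.

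I do not anticipate a real obstacle here: the argument is essentially a bookkeeping reduction to the hypothesis $\delta(\mu)=\infty$, and the only point that requires care is verifying that multiplication by $x^{-n}$ gives a well-defined isometry $L^2(\mu)\hookrightarrow L^2(x^{2n}d\mu)$ sending $\{x^k\}_{k\geq n}$ onto $\{x^j\}_{j\geq 0}$ and preserving the orthogonal complement structure. Both of these depend on the mild assumptions $\mu(\{0\})=0$ and $\mu\in\mathcal M^*(\bbR)$, so the result follows once these verifications are in place.
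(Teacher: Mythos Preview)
Your proof is correct and uses essentially the same idea as the paper: both exploit that division by $x^n$ gives an isometry from $L^2(\mu)$ onto $L^2(x^{2n}\,d\mu)$ (thanks to $\mu(\{0\})=0$) which carries $\{x^k\}_{k\ge n}$ to $\{x^j\}_{j\ge 0}$, and then invoke $\delta(\mu)=\infty$. The only cosmetic difference is that the paper argues directly---given $f$ and $\varepsilon$, approximate $f(x)x^{-n}$ by a polynomial $p$ in $L^2(x^{2n}d\mu)$ and observe $\|f - x^n p\|_{L^2(\mu)}<\varepsilon$---whereas you phrase the same reduction via the orthogonal-complement criterion; these are equivalent formulations of the same argument.
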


\begin{proof}
 Let $n$ be a nonnegative integer and let $f\in L^2(\mu)$ and $\varepsilon>0$ be given. Since $\mu(\{0\})=0$ the function $f(x)x^{-n}$ is defined $\mu$-almost everywhere and belongs to  $L^2(x^{2n}d\mu(x))$, which has the property that the polynomials are dense. Therefore, there exists  a polynomium $p(x)$ such that
\begin{equation}
\varepsilon^2 >\int \left|f(x)x^{-n}-p(x)\right|^2 x^{2n}d\mu(x)=\int \left|f(x)-x^np(x)\right|^2\,d\mu(x),
\end{equation}  
which was to be proved.
\end{proof}

\medskip

Before we present the second part of the proof, let us review some facts about indeterminate measures.  If a measure $\mu\in\mathcal M^*(\bbR)$ is indeterminate, then the set $V$ of all measures on $\bbR$ that have the same moments as $\mu$
are given by the Nevanlinna parametrization, see \cite[p. 98]{Ak}, $V=\{\sigma_\varphi\}_{\varphi\in \mathcal N\cup\{\infty\}}$ defined by
\begin{equation}\label{eq:Nev}
\int\frac{d\sigma_\varphi(x)}{x-z}=-\frac{A(z)\varphi(z)-C(z)}{B(z)\varphi(z)-D(z)},\quad z\in\bbC\setminus\bbR,
\end{equation}
where the parameter set $\mathcal N$ is the set of Nevanlinna-Pick functions.  Such a function (also called a Herglotz function in the literature) is a holomorphic function $\varphi$ on the cut plane $\mathcal A:=\bbC\setminus\bbR$ satisfying $\Imag(\varphi(z))/\Imag(z)\ge 0$ for $z\in\mathcal A$. The functions $A,B,C,D$ are certain  entire functions defined in terms of the orthonormal polynomials $p_n$ and the polynomials $q_n$ of the second kind for $\mu$. (When $\varphi=\infty$ the right-hand side of \eqref{eq:Nev} is to be understood as $-A(z)/B(z)$.)

The N-extremal solutions are parametrized  by the Nevanlinna-Pick functions which are a real constant or infinity denoted $\sigma_t, t\in\bbR\cup\{\infty\}$.  
 
Several facts about these measures will be important for us:
\begin{itemize}

\item[(i)] Each $\sigma_t$ is discrete and supported on the zero set of the entire function $B(z)t-D(z)$, interpreted as $B(z)$ if $t=\infty$.
\item[(ii)] If the indeterminate moment problem is Stieltjes, i.e., there exists a solution $\sigma\in V$ supported by $[0,\infty)$, then there exists
a real number $\alpha\le 0$ such that $\supp(\sigma_t)\subseteq[0,\infty)$ if and only if $t\in[\alpha,0]$ (see \cite{B:V}). 
\item[(iii)] If $\alpha=0$ then there is one and only one solution in $V$ supported by $[0,\infty)$ namely the N-extremal measure $\sigma_0$. This moment problem is called determinate in the sense of Stieltjes, det(S) in short. 
\item[(iv)] If $\alpha<0$ then there are infinitely many solutions $\sigma_\varphi$ supported by $[0,\infty)$ and the corresponding functions $\varphi\in\mathcal N$ are characterized in \cite{P}. This moment problem is called indeterminate in the sense of Stieltjes, indet(S) in short.
\end{itemize}

Now we can present the second part of our first proof of Theorem \ref{maindense2}.

\begin{prop}\label{deltainf}
Assume that $\mu\in\mathcal M^*(\bbR)$ satisfies the hypotheses (1-4) in Theorem \ref{maindense2}.  Then $\delta(\mu)=\infty$.
\end{prop}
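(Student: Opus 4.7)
The plan is to argue by contradiction, invoking the classification of measures with finite density index from \cite{B:T2}.

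First I would observe that hypotheses (1) and (2), combined with M.\ Riesz's theorem on $(1+x^2)\,d\mu$ and the inequality $2x\le 1+x^2$ valid on $[0,\infty)$, already give $\delta(\mu)\ge 2$, so the only remaining question is whether $\delta(\mu)$ is finite. If $\mu$ is non-discrete, the Berg--Thill fact recalled in the paragraph preceding the proposition yields $\delta(\mu)=\infty$ directly. The substantive case is therefore the discrete one, and here (3) and (4) must play a decisive role.

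Now assume toward contradiction that $\delta(\mu)=k<\infty$, so that $\bbC[x]$ fails to be dense in $L^2(x^{k+1}\,d\mu)$, i.e., $x^{k+1}\,d\mu$ is indeterminate and not N-extremal in the Hamburger sense. According to the classification in \cite{B:T2}, a measure on $[0,\infty)$ with finite density index must have very rigid structure: it is discrete with support on the zero set of an entire function of minimal exponential type, arising as an extremal Stieltjes solution of an associated indeterminate Stieltjes moment problem. In particular, $\supp(\mu)$ has no finite accumulation points, so $0$ can belong to $\supp(\mu)$ only as an isolated atom with $\mu(\{0\})>0$. Hypothesis (4) precludes the atomic case, so $0\notin\supp(\mu)$; but by (3) and (4) together with the Remark following Theorem~\ref{maindense2}, $0$ is a non-isolated point of $\supp(\mu)$, i.e., an accumulation point of $\supp(\mu)\setminus\{0\}$. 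This contradicts the structural conclusion, and we conclude that $\delta(\mu)=\infty$.

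The main obstacle I anticipate is pinpointing the precise form of the Berg--Thill structural theorem that pins down the support-at-$0$ dichotomy (atom versus infimum bounded away from $0$); once the correct statement from \cite{B:T2} is isolated, the derivation of the contradiction from (3) and (4) is immediate. A potentially cleaner alternative would be to establish directly, under (1) and (4), that determinacy of $\mu$ propagates to determinacy of $x^k\,d\mu$ for every $k$, but such propagation across the Stieltjes shift is itself delicate and seems to require essentially the same input from \cite{B:T2}.
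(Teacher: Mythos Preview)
Your proposal is correct and follows essentially the same route as the paper: assume $\delta(\mu)<\infty$, invoke the Berg--Thill classification of finite--density--index measures from \cite{B:T2}, and derive a contradiction with hypotheses (3) and (4). The paper pins this down by citing \cite[Theorem~2.6]{B:T2}, which gives the explicit decomposition $\mu=c\delta_0+x^{2-\delta(\mu)}\,d\nu(x)$ with $\nu$ discrete and supported on strictly positive zeros of an entire function; from this, (4) forces $c=0$ and then $0\notin\supp(\mu)$, contradicting (3)---exactly the dichotomy you were looking for.
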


\begin{proof} Since $\mu$ is assumed determinate we have already noticed that $\delta(\mu)\ge 2$. Assume now that $2\le \delta(\mu)<\infty$. By \cite[Theorem 2.6]{B:T2} we know that
$$
\mu=c\delta_0+x^{2-\delta(\mu)}\,d\nu(x)
$$
for some $c\geq0$ and $\nu$ is a discrete measure supported on the strictly positive zeros of an entire function.  By (4) we have $c=0$, but then $0\not\in\supp(\mu)$, which is a contradiction.
\end{proof}

Combining Proposition~\ref{deltainf} with Proposition \ref{sigmainf} proves Theorem \ref{maindense2}.

\section{A Second Approach without use of the density index}\label{second}

 Let us begin by reviewing some facts about orthogonal polynomials and their zeros.

Suppose $\mu\in\mathcal M^*(\bbR)$ and let $\{p_n\}_{n=0}^{\infty}$ be the orthonormal polynomials for $\mu$, so that $\deg(p_n)=n$ and $p_n$ has positive leading coefficient.
\begin{itemize}
\item[(i)] The orthonormal polynomials for the measure $\mu$ have all of their zeros in $(a,b)$ for any interval $[a,b]\supseteq\supp(\mu)$.
\item [(ii)] The zeros of $p_n$ are simple.
\item[(iii)] The zeros of $p_n$ and $p_{n+1}$ strictly interlace.
\item[(iv)] If $\mu$ is determinate and $x_0\in\supp(\mu)$, then for every $\varepsilon>0$, the interval $(x_0-\varepsilon, x_0+\varepsilon)$ has a zero of $p_n$ for all sufficiently large $n$.
\end{itemize}
All of these facts can be found in \cite{Ch}.% or in  \cite[Section 1.2]{OPUC1}.

Now we are ready to present a lemma that will be needed for this proof of Theorem \ref{maindense2}.

\begin{lemma}\label{gammarat} Assume that $\mu\in\mathcal M^*(\bbR)$ is determinate and
satisfies $\supp(\mu)\subseteq[0,\infty)$ and $0\in\supp(\mu)$ is not an isolated point of $\supp(\mu)$.  Let
\begin{equation}\label{eq:coef}
p_{n}(x)=\gamma_{n,n}x^n+\gamma_{n,n-1}x^{n-1}+\cdots+\gamma_{n,1}x+\gamma_{n,0}
\end{equation}
be the degree $n$ orthonormal polynomial for $\mu$ with positive leading coefficient.  For any fixed $k\in\bbN_0$ it holds that
\[
\lim_{\nri}\left|\frac{\gamma_{n,k}}{\gamma_{n,k+1}}\right|=0.
\]
\end{lemma}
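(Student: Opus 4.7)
The plan is to rewrite the coefficient ratio as a ratio of consecutive elementary symmetric functions in the reciprocals of the zeros of $p_n$, and to prove convergence to $0$ by a short induction on $k$.

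First I would establish the key zero-clustering statement: for each fixed positive integer $L$, the $L$-th smallest zero $x_L^{(n)}$ of $p_n$ tends to $0$ as $n\to\infty$. By the hypotheses, $0$ is a non-isolated point of $\supp(\mu)$, so for any $\varepsilon>0$ one can pick $L$ distinct points $t_1,\dots,t_L\in\supp(\mu)\cap(0,\varepsilon)$. Fact (iv) (which uses determinacy of $\mu$) applied at each $t_i$, together with a choice of pairwise disjoint neighborhoods inside $(0,\varepsilon)$, forces $p_n$ to have at least $L$ distinct zeros in $(0,\varepsilon)$ for all $n$ sufficiently large. Combined with the positivity of the zeros (from (i) applied with a suitable $[a,b]\supseteq\supp(\mu)$, or directly from orthogonality against $p_n(x)/x$), this gives $x_L^{(n)}\to 0$, equivalently $y_L^{(n)}:=1/x_L^{(n)}\to\infty$ for each fixed $L$ (where the $y_j^{(n)}$ are sorted in decreasing order).

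Second, by Vieta's formulas for $p_n(x)=\gamma_{n,n}\prod_{j=1}^n(x-x_j^{(n)})$ and by factoring out $\prod_j x_j^{(n)}$ from numerator and denominator,
\[
\left|\frac{\gamma_{n,k}}{\gamma_{n,k+1}}\right|=\frac{e_{n-k}(x_1^{(n)},\dots,x_n^{(n)})}{e_{n-k-1}(x_1^{(n)},\dots,x_n^{(n)})}=\frac{e_k(y_1^{(n)},\dots,y_n^{(n)})}{e_{k+1}(y_1^{(n)},\dots,y_n^{(n)})},
\]
where $e_m$ is the $m$-th elementary symmetric polynomial.

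Third, I would prove by induction on $k\in\bbN_0$ that whenever a sorted positive sequence $(y^{(n)})$ satisfies $y_l^{(n)}\to\infty$ for each fixed $l$, the ratio $e_k(y^{(n)})/e_{k+1}(y^{(n)})$ tends to $0$. The case $k=0$ is just $1/e_1(y^{(n)})\le 1/y_1^{(n)}\to 0$. For the inductive step, the splitting identity $e_m(y_1,\dots,y_n)=y_1\,e_{m-1}(y_2,\dots,y_n)+e_m(y_2,\dots,y_n)$ applied with $m=k$ and $m=k+1$, followed by discarding the nonnegative term $e_{k+1}(y_2,\dots,y_n)$ from the denominator, yields
\[
\frac{e_k(y^{(n)})}{e_{k+1}(y^{(n)})}\le\frac{e_{k-1}(y_2^{(n)},\dots,y_n^{(n)})}{e_k(y_2^{(n)},\dots,y_n^{(n)})}+\frac{1}{y_1^{(n)}},
\]
and both terms tend to $0$: the first by the inductive hypothesis applied to the shifted sequence, whose $l$-th entry $y_{l+1}^{(n)}$ still tends to $\infty$ for each fixed $l$; the second by Step 1.

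The main obstacle I expect is Step 1, where one must leverage fact (iv) and the non-isolation of $0$ simultaneously at several support points to deduce \emph{many} zeros of $p_n$ inside an arbitrary small neighborhood of $0$. Once this clustering is established, Steps 2 and 3 reduce the lemma to elementary symmetric-function bookkeeping.
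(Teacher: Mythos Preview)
Your proof is correct, but the route differs from the paper's in both main steps.

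For the zero-clustering, the paper does not invoke fact (iv) directly. Instead it defines $\xi_k=\lim_{n\to\infty} x_{n,k}$ and argues by contradiction: if $\xi_{k+1}>0$ while $\xi_1=\cdots=\xi_k=0$, then the weak convergence of the Gauss quadrature measures $\pi_n$ to $\mu$ (here determinacy enters) forces $\mu((0,\xi_{k+1}))=0$, contradicting that $0$ is not isolated in $\supp(\mu)$. Your argument, choosing $L$ distinct support points $t_1,\dots,t_L\in(0,\varepsilon)$ with disjoint neighborhoods and applying fact (iv) at each, is shorter and entirely adequate; it only uses the qualitative attraction-of-zeros statement rather than the quantitative quadrature limit.

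For the coefficient ratio, the paper works with successive derivatives: by Rolle's theorem $p_n^{(j)}$ has $n-j$ simple positive zeros $x_{n,1}^{(j)}<\cdots<x_{n,n-j}^{(j)}$, and the logarithmic-derivative identity
\[
\frac{p_n^{(j+1)}(0)}{p_n^{(j)}(0)}=-\sum_{l=1}^{n-j}\frac{1}{x_{n,l}^{(j)}}
\]
gives the result once one observes $x_{n,1}^{(j)}<x_{n,j+1}\to 0$. Your approach instead rewrites $|\gamma_{n,k}/\gamma_{n,k+1}|=e_k(y^{(n)})/e_{k+1}(y^{(n)})$ in the reciprocal zeros and runs a clean induction via the splitting identity $e_m(y_1,\dots,y_n)=y_1 e_{m-1}(y_2,\dots,y_n)+e_m(y_2,\dots,y_n)$. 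The two are essentially dual views of the same Vieta combinatorics: the paper's repeated Rolle step effectively peels off one zero per derivative, while your recursion peels off the largest reciprocal $y_1^{(n)}$. Your version avoids tracking the zeros of the derivatives and is arguably more self-contained; the paper's version makes the mechanism (``the smallest zero of each derivative tends to $0$'') more geometrically visible.
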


\begin{proof}
To prove this, we shall consider the derivatives of $p_n$ at $0$.  Let
\[
x_{n,1}<x_{n,2}<\cdots<x_{n,n}
\]
denote the zeros of $p_n$.  Note that $0<x_{n,1}$. The expression
$$
p_n(x)=\gamma_{n,n}\prod_{j=1}^n (x-x_{n,j})
$$ 
shows that $(-1)^{n-j}\gamma_{n,j}>0, j=0,1,\ldots,n$, and in particular $\gamma_{n,j}\neq 0$.
Using the terminology of \cite[(5.6)]{Ch} we define $\xi_k=\lim_{n\to\infty}x_{n,k}, k=1,2,\ldots$. Clearly $\xi_1\le \xi_2\le \ldots$, and since  $0$ is a point of $\supp(\mu)$, it follows that $\xi_1=0$.

We have
\[
\frac{p_n'(0)}{p_n(0)}=\sum_{j=1}^n\frac{1}{-x_{n,j}}.
\]

and since $x_{n,1}\rightarrow0$ as $\nri$, we get
\[
\lim_{\nri}\left|\frac{p_n'(0)}{p_n(0)}\right|=\infty.
\]
In other words $|\gamma_{n,0}/\gamma_{n,1}|\rightarrow0$.

We now claim that $\xi_1=\xi_2=\cdots =0$. In fact, assuming $\xi_1=\cdots =\xi_k=0$ and $\xi_{k+1}>0$ we claim that $\mu((0,\xi_{k+1}))=0$, which contradicts that $0$ is not an isolated point in $\supp(\mu)$. To see the claim let $f$ be a  continuous function with compact support in $(0,\xi_{k+1})$ and satisfying $0\le f\le 1$. Let 
\begin{equation}\label{eq:Gauss} 
\pi_n=\sum_{j=1}^n H_{n,j}\delta_{x_{n,j}}
\end{equation}
denote the Gauss quadrature measure with masses $H_{n,j}>0$ at the zeros $x_{n,j}$ of $p_n$. By assumption $\mu$ is determinate and then we know that $\pi_n$ converges weakly to $\mu$ as $n\to\infty$. Therefore
$$
\int f(x)\,d\mu(x)=\lim_{n\to\infty} \sum_{j=1}^n H_{n,j}f(x_{n,j})=0
$$
because for $n$ sufficiently large we have $x_{n,j}<\inf\supp(f)$ for $j=1,\ldots,k$ and
$x_{n,j}\ge \xi_{k+1}$ for $j=k+1,\ldots,n$. Since $f$ is arbitrary  with the properties stated, we get $\mu((0,\xi_{k+1}))=0$ as claimed. 

We now look at the zeros of $p_n'$. Clearly $p_n'$ has  a zero $x'_{n,j} \in (x_{n,j}, x_{n,j+1}), j=1,\ldots,n-1$, so $p_n'$ has n-1 simple zeros $x'_{n,1}<x'_{n,2}<\cdots<x'_{n,n-1}$. In the same way we see that $p^{(j)}_n$ has $n-j$ strictly positive zeros
$$
x^{(j)}_{n,1}<x^{(j)}_{n,2}<\cdots<x^{(j)}_{n,n-j},\quad j<n,
$$
and
$$
\frac{p^{(j+1)}_n(0)}{p^{(j)}_n(0)}=\sum_{k=1}^{n-j}\frac{1}{-x^{(j)}_{n,k}}.
$$
Using that $x_{n,j+1}\rightarrow0$ as $\nri$ for every  fixed $j$, we see that $x^{(j)}_{n,1}\rightarrow0$ as $\nri$.  Therefore
\[
\lim_{\nri}\left|\frac{p^{(j+1)}_n(0)}{p^{(j)}_n(0)}\right|=\infty.
\]
In other words $|\gamma_{n,j}/\gamma_{n,j+1}|\rightarrow0$ for $n\to\infty$.
\end{proof}

\begin{rem}\label{thm:SW}  {\rm It is important for the conclusion of Lemma~\ref{gammarat} that $\mu$ is determinate. Let us consider the Stieltjes-Wigert polynomials associated with the lognormal distribution. We follow the notation of \cite{Chris}. For $0<q<1$ consider the probability measure $\mu$ with density on the interval $(0,\infty)$ given as
$$
w(x)=\frac{q^{1/8}}{2\pi\log(q^{-1})}\frac{1}{\sqrt{x}}\exp\left(\frac{(\log x)^2}{2\log q}\right),\quad x>0.
$$  
It is indeterminate with $\supp(\mu)=[0,\infty)$. The orthonormal polynomials are given as
$$
p_n(x)=(-1)^n\sqrt{\frac{q^n}{(q;q)_n}}\sum_{k=0}
^n \frac{(q;q)_n}{(q;q)_k (q;q)_{n-k}}(-1)^k q^{k^2}x^k,
$$
where we use the notation 
$$
(q;q)_n=\prod_{k=1}^n  (1-q^k).
$$
Therefore
$$
\frac{\gamma_{n,k}}{\gamma_{n,k+1}}=-\frac{1-q^{k+1}}{(1-q^{n-k})q^{2k+1}},
$$
which has a non-zero limit when $n\to\infty$.}
\end{rem}

Before we finish our proof, we need to mention one additional fact.  For a determinate measure $\mu$, it is true that
\begin{equation}\label{eq:mass}
\left(\sum_{n=0}^{\infty}|p_n(x)|^2\right)^{-1}=\mu(\{x\}),
\end{equation}
where we interpret $(\infty)^{-1}=0$. This is Parseval's formula for the indicator function $1_{x}$.

\begin{proof}[Second Proof of Theorem \ref{maindense2}]
We  fix $n\in\bbN_0$ and define the Hilbert space $H=L^2(\mu)\oplus\bbC^{n+1}$ with inner product
\[
\langle(f,\beta),(g,\gamma)\rangle=\int f(x)\overline{g(x)}\,d\mu(x)+\gamma^*\beta
\]
To each polynomial $p$, we can associate the vector in $H$ given by
\begin{equation}\label{pembed}
\vec{p}=(p,(p(0),p^{(1)}(0),\ldots,p^{(n)}(0))).
\end{equation}
Our first task is to prove that the collection of all such vectors (for all polynomials $p$) is dense in $H$.

To prove this claim, suppose $(f,\beta)\in H$ is orthogonal to all vectors of the form \eqref{pembed}.  As in \eqref{eq:coef} we write
\begin{equation}\label{pform}
p_r(x)=\sum_{j=0}^r\gamma_{r,j}x^j
\end{equation}
for the degree $r$ orthonormal polynomial for the measure $\mu$ with positive leading coefficient.  Then we find
\[
\int p_r(x)f(x)d\mu(x)=-\sum_{k=0}^{n}k!\beta_k\gamma_{r,k}
\]
for all $r\in\bbN_0$ with the convention that $\gamma_{r,k}=0$ when $k>r$. We start by proving $\beta_n=0$. Assume $\beta_n\neq 0$ for obtaining a contradiction.  
We have for $r\ge n$
$$
\left|\sum_{k=0}^n k! \beta_k\gamma_{r,k}\right| \ge |\gamma_{r,n}|\left(n!|\beta_n|
-\left|\sum_{k=0}^{n-1} k!\beta_k\gamma_{r,k}/\gamma_{r,n}\right|\right),
$$
and by  Lemma \ref{gammarat} the last term tends to $0$ for $r\to\infty$. For $r$ sufficiently large we therefore have
\[
\left|\sum_{k=0}^{n}k!\beta_k\gamma_{r,k}\right|>\frac{n!}{2}|\beta_n\gamma_{r,n}|>\frac{n!}{2}|\beta_n\gamma_{r,0}|.
\]
Therefore,
\[
\frac{n!}{2}|\beta_n\gamma_{r,0}|\leq\left|\int p_r(x)f(x)d\mu(x)\right|
\]
for $r$ sufficiently large.
However, Bessel's inequality asserts that the terms on the right-hand side of this inequality form a square summable sequence.  The terms on the left-hand side are not square summable when $\beta_n\neq0$ because $\mu(\{0\})=0$, cf. \eqref{eq:mass}. This gives a contradiction so $\beta_n=0$.  We can then proceed step by step as above to derive that each $\beta_j=0$ for all $j=0,\ldots,n$.  We conclude that $f$ is orthogonal to each $p_r$ in $L^2(\mu)$.  Since we are know that the polynomials are dense in $L^2(\mu)$, this tells us that $f=0$ as desired.

Now, take any $g\in L^2(\mu)$.  By our previous calculation, we know that we can find polynomials $\{a_k\}_{k\in\bbN}$ so that $\vec{a}_k\rightarrow (g,\vec{0})$ as $k\rightarrow\infty$, where $\vec{0}$ is the zero vector in $\bbC^{n+1}$.  Writing
\[
a_k(x)=b_k(x)+x^{n+1}r_k(x)
\]
with $\deg(b_k)\leq n$, we have
\[
(b_k(0),b_k^{(1)}(0),\ldots,b_k^{(n)}(0))=(a_k(0),a_k^{(1)}(0),\ldots,a_k^{(n)}(0))\rightarrow\vec{0}
\]
as $k\rightarrow\infty$.  This implies $b_k\rightarrow0$ in $L^2(\mu)$ as $k\rightarrow\infty$.  We also have $\|g- a_k\|_{L^2(\mu)}\rightarrow0$ as $k\rightarrow\infty$ and hence the triangle inequality implies $\|g- x^{n+1}r_k\|_{L^2(\mu)}\rightarrow0$ as $k\rightarrow\infty$, which is what we wanted to show.
\end{proof}

\section{Measures of infinite index of determinacy}\label{det}
 For a determinate measure
$\mu\in\mathcal M^*(\bbR)$ and a complex number $z\in\bbC$ Berg and Dur{\'a}n  introduced in \cite{B:D1}
\begin{equation}
{\ind}_z(\mu):=\sup\{k\in\bbN_0: |x-z|^{2k}\,d\mu(x) \;\mbox{is determinate}\}
\end{equation}  
called the index of determinacy of $\mu$ at the point $z$. They proved that
${\ind}_z(\mu)$ is constant $k\in\bbN_0\cup\{\infty\}$ for $z$ in the $\bbC\setminus\supp(\mu)$ and constant equal to $k+1$ for $z\in\supp(\mu)$. If $\mu$ is a non-discrete determinate measure, it turns out that ${\ind}_z(\mu)=\infty$ for all $z\in\bbC$. 

Finally in \cite{B:D2} the authors decided to use the notation 
\begin{equation}\label{eq:ind}
{\ind}(\mu):={\ind}_z(\mu),\quad z\in\bbC\setminus\supp(\mu),
\end{equation}
for the index of determinacy of $\mu$.

A main result Theorem 3.6 in \cite{B:D1} explains how to obtain a measure with finite index of determinacy. One starts with an N-extremal indeterminate measure
$\tau\in\mathcal M^*(\bbR)$. Such a measure is discrete supported in countably many points of the real axis, and these points are the zero set of a certain entire function of minimal exponential type as  explained in Section \ref{first}.  Let $k\in\bbN_0$. If the measure $\mu$ is obtained  from $\tau$ by removing $k+1$ mass points of $\tau$ then $\mu$ is determinate with  ${\ind}(\mu)=k$ and
\begin{equation}
{\ind}_z(\mu)=\left\{\begin{array}{cl} k, &  z\in\bbC\setminus \supp(\mu)\\
k+1,& z\in\supp(\mu).
\end{array}
\right.    
\end{equation}

Furthermore, Theorem 3.9 in \cite{B:D1} states that any determinate measure $\mu$ with ${\ind}(\mu)=k\in\bbN_0$ can be obtained in this way.

For a system $(z_1,k_1),\ldots, (z_N,k_N)$, where the $z$'s are $N$ different complex numbers  and the $k$'s are nonnegative integers, we introduce the polynomial
\begin{equation}\label{eq:system}
R(x):=\prod_{j=1}^N (x-z_j)^{k_j+1}  \quad\mbox{of degree}\quad M=\sum_{j=1}^N (k_j+1).
\end{equation}

\begin{theorem}\label{thm:indinf} Let $\mu\in\mathcal M^*(\bbR)$ be a determinate measure with  index ${\ind}(\mu)=\infty$, and consider the system $(z_j,k_j), j=1,\ldots,N$  above with polynomial $R$ of degree $M$ as in \eqref{eq:system}.

If $\mu(\{z_1,\ldots,z_N\})=0$ and in particular if $z_j\in\bbC\setminus\bbR, j=1,\ldots,N$ then
$$
\overline{R(x) \bbC[x]}=L^2(\mu).
$$ 
\end{theorem}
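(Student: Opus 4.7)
The plan is to pass to the weighted auxiliary measure $d\nu(x):=|R(x)|^2\,d\mu(x)$ and reduce the theorem to the determinacy of $\nu$. Note that $\nu\in\mathcal M^*(\bbR)$: it has moments of every order because $|R|^2$ is a polynomial and $\mu$ has moments of every order, and $\supp(\nu)$ is infinite because it agrees with $\supp(\mu)$ off the finite set $\{z_1,\ldots,z_N\}$. Since by hypothesis $\mu(\{z_1,\ldots,z_N\})=0$, the polynomial $R$ is nonzero $\mu$-almost everywhere, so for any $f\in L^2(\mu)$ the function $g:=f/R$ is defined $\mu$-a.e.\ and satisfies
$$
\int |g|^2\,d\nu \;=\; \int |f|^2\,d\mu \;<\;\infty,
$$
i.e.\ $g\in L^2(\nu)$. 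If polynomials $p_n$ approximate $g$ in $L^2(\nu)$, then the same identity gives $\|f-Rp_n\|_{L^2(\mu)}=\|g-p_n\|_{L^2(\nu)}\to 0$, proving density of $R(x)\bbC[x]$ in $L^2(\mu)$.

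By M.\ Riesz's theorem recalled in the introduction, polynomials are dense in $L^2(\nu)$ whenever $\nu$ is determinate, so the whole problem reduces to showing that $\nu$ is determinate. I would do this by induction on $N$. The case $N=1$ is immediate from the definition, since $\mathrm{ind}_{z_1}(\mu)=\infty\ge k_1+1$. For the inductive step, assume
$$
\nu_{N-1}\;:=\;\prod_{j=1}^{N-1}|x-z_j|^{2(k_j+1)}\,d\mu
$$
is determinate. Now $\mathrm{ind}(\mu)=\infty$ forces $\mu$ to be non-discrete, because any discrete determinate measure is, by Theorem~3.9 of Berg--Dur\'an cited in the excerpt, obtained from an N-extremal measure by removing finitely many atoms and therefore has finite index. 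Because $\mu(\{z_j\})=0$ for every $j$, the weight $\prod_{j=1}^{N-1}|x-z_j|^{2(k_j+1)}$ vanishes only on a set of $\mu$-measure zero, so $\nu_{N-1}$ is again non-discrete. Invoking the Berg--Dur\'an fact that every non-discrete determinate measure has $\mathrm{ind}=\infty$, I get $\mathrm{ind}_{z_N}(\nu_{N-1})=\infty\ge k_N+1$, and therefore $\nu_N=|x-z_N|^{2(k_N+1)}\,d\nu_{N-1}$ is determinate. After $N$ steps $\nu_N=\nu$ is determinate.

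The main obstacle is exactly this preservation of the property ``non-discrete and determinate'' along the induction, since that is what feeds each successive weight into the definition of the index. The assumption $\mu(\{z_1,\ldots,z_N\})=0$ is essential here for two reasons: it guarantees that weighting by the $|x-z_j|^{2(k_j+1)}$ does not annihilate atoms of $\mu$ in a way that could collapse the support to a finite set, and it is what makes the map $g\mapsto Rg$ an isometry of $L^2(\nu)$ onto $L^2(\mu)$ in the final step. Once $\nu$ is known to be determinate, the conclusion $\overline{R(x)\bbC[x]}=L^2(\mu)$ follows formally from this isometry and M.\ Riesz.
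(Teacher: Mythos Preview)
Your reduction to the determinacy of $\nu=|R(x)|^2\,d\mu(x)$ is exactly what the paper does: once $\nu$ is determinate, the isometry $g\mapsto Rg$ from $L^2(\nu)$ onto $L^2(\mu)$ together with Riesz's density theorem finishes the argument in one line. The paper, however, does not supply the inductive justification you attempt; it simply asserts that $\nu$ is determinate and refers to Proposition~2.9 of \cite{B:D2} for this fact.

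Your induction contains a genuine error. You claim that $\mathrm{ind}(\mu)=\infty$ forces $\mu$ to be non-discrete, invoking Theorem~3.9 of \cite{B:D1}. But that theorem only says that every determinate measure with \emph{finite} index arises by removing finitely many atoms from an N-extremal measure; it does not assert that every discrete determinate measure has finite index. In fact this is false: take $\mu=\sum_{n\ge 1}2^{-n}\delta_{1/n}$, which is purely atomic, lies in $\mathcal M^*(\bbR)$, is compactly supported and hence determinate, and has $\mathrm{ind}(\mu)=\infty$ because every weight $|x-z|^{2k}\,d\mu$ is again compactly supported. So the implication ``$\mathrm{ind}(\mu)=\infty\Rightarrow\mu$ non-discrete'' fails, and with it the step that lets you conclude $\mathrm{ind}(\nu_{N-1})=\infty$ and continue the induction. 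The determinacy of $\nu$ is still true, but establishing it for several distinct base points $z_1,\ldots,z_N$ genuinely requires the multi-point results of \cite{B:D2} (or an independent argument), not a bootstrap through the non-discrete case.
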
 

\begin{proof} Let $f\in L^2(\mu)$ and $\varepsilon>0$ be given. Then $f(x)/R(x)$ is defined $\mu$ almost everywhere and it belongs to $L^2(|R(x)|^2\,d\mu(x))$ in which the polynomials are dense because the measure $|R(x)|^2\,d\mu(x)$ is determinate. Therefore there exists a polynomial $p(x)$ such that
$$
\int |f(x)/R(x)-p(x)|^2 |R(x)|^2\,d\mu(x)<\varepsilon^2,
$$ 
i.e.,
$$
\int |f(x)-R(x)p(x)|^2 d\mu(x)<\varepsilon^2.
$$
\end{proof}

This is a version of Proposition 2.9 in \cite{B:D2} adapted to our purpose. Notice also that the famous Theorem of Marcel Riesz (\cite[p. 223]{R23}) can be stated:

``If $\mu$ is determinate then the polynomials of the form $(x-z)p(x), p\in\bbC[x]$ are dense in $L^2(\mu)$ for an arbitrary $z\in\bbC\setminus\bbR$."

\begin{rem}\label{thm:detdelta}  {\rm Theorem~\ref{maindense2} can be deduced from Theorem~\ref{thm:indinf}. In fact, if the hypotheses (1-4) in Theorem~\ref{maindense2} are satisfied, then we know from Proposition~\ref{deltainf}
that $\delta(\mu)=\infty$. This implies that ${\ind}_0(\mu)=\ind(\mu)=\infty$ for if
${\ind}_0(\mu)<\infty$ then $\tau:=x^{{\ind}_0(\mu)+2}\,d\mu(x)$ is indeterminate and hence N-extremal since $\delta(\mu)=\infty$. Clearly $\tau(\{0\})=0$ so by (iii) in Section 2 we see that $\tau$ must be indet(S). However, by \cite[Theorem 2.1]{B:T2} we get $\delta(\tau)\in\{0,1\}$, which is a contradiction to $\delta(\tau)=\delta(\mu)=\infty$.}
\end{rem}

\bigskip

\noindent\textbf{Acknowledgements.}  It is a pleasure to thank Lance Littlejohn for encouraging us to pursue this line of research.

\bigskip

\bigskip

\noindent Christian Berg\\
Department of Mathematical Sciences, University of Copenhagen\\
Universitetsparken 5, DK-2100 Copenhagen, Denmark\\
email: {\tt{berg@math.ku.dk}}

\bigskip

\noindent Brian Simanek\\
Department of Mathematics, Baylor University\\
Waco, TX, USA\\
email: {\tt{Brian$\_$Simanek@baylor.edu}}

\bigskip

\noindent Richard Wellman\\


\begin{thebibliography}{99}
\bibitem{Ak} N.~I.~Akhiezer, {\em The classical moment problem}, Oliver \& Boyd, Edinburgh, 1965.

\bibitem{B:C} C.~Berg and J.~P.~R. Christensen, {\em Density questions in the classical theory of moments}, Ann. Inst.  Fourier, Grenoble {\bf 31} (1981), 99--114.

\bibitem{B:D1} C.~Berg and A.~J.~ Dur\'an, {\em The  index of determinacy for measures and the $\ell^2$-norm of orthonormal polynomials}, Trans. Amer. Math. Soc. {\bf 347}, No. 8 (1995), 2795--2811.

\bibitem{B:D2} C.~Berg and A.~J.~Dur\'an, {\em When does  a discrete differential perturbation of a sequence of orthonormal polynomials belong to $\ell^2$?}, Journal of Functional Analysis {\bf 136}, No. 1 (1996), 127--153.

\bibitem{B:T1} C.~Berg and M.~Thill, {\em Rotation invariant moment problems}, Acta Math. {\bf 167} (1991), 207--227.

\bibitem{B:T2} C.~Berg and M.~Thill, {\em A density index for the Stieltjes moment problem}, IMACS Ann. Comput. Appl. Math., 9 J. C. Baltzer A.G., Basel, 1991.

\bibitem{B:V} C.~Berg and G.~Valent, {\em The Nevanlinna parametrization for some indeterminate Stieltjes moment problems associated with birth and death processes}, Methods Appl. Anal. 1 (1994), no. 2, 169--209.

\bibitem{BE161} P.~Borwein and T.~Erd\'elyi, {\em Polynomials and Polynomial Inequalities}, Grad. Texts in Math., 161
Springer--Verlag, New York, 1995.

\bibitem{Ch} T.~S.~Chihara, {\em An Introduction to Orthogonal Polynomials}, Gordon and Breach, Science Publishers, Inc., New York, 1978.

\bibitem{Chris} J.~S.~Christiansen, {\em The moment problem associated with the Stieltjes--Wigert polynomials}, J. Math. Anal. Appl. {\bf 277} (2003), 218--245.

\bibitem{GKM09} D.~Gomez-Ullate, N.~Kamran, and R.~Milson, {\em An extended class of orthogonal polynomials defined by a Sturm-Liouville problem}, J. Math. Anal. Appl. 359 (2009), 352--367.

\bibitem{P} H.~L.~Pedersen, {\em La param\'etrisation de Nevanlinna et le probl{\`e}me des moments de Stieltjes ind\'etermin\'e}, Expo Math. {\bf 15} (1997), 273--278.

\bibitem{R23} M.~Riesz, {\em Sur le probl\`{e}me des moments et le th\'{e}or\`{e}me de Parseval correspondant}, Acta Sci. Math. (Szeged) 1 (1923), 209--225.

\bibitem{S} B.~Simon, {\em The classical moment problem as a self-adjoint finite difference operator}, Adv. Math. {\bf 137} (1998), 82--203.

%\bibitem{OPUC1} B.~Simon, Orthogonal {\em Polynomials on the Unit Circle, Part One: Classical Theory}, American Mathematical Society, Providence, RI, 2005.

\bibitem{SimI} B.~Simon, {\em Real Analysis}, Compr. Course Anal., Part 1 American Mathematical Society, Providence, RI, 2015.


\end{thebibliography}
\end{document}